\newtheorem{theorem}{Theorem}[section]
\newtheorem{lemma}{Lemma}[section]
\newtheorem{example}{Example}[section]
\begin{document}
\hyphenpenalty=100000
\thispagestyle{plain} 
\begin{flushleft}

{\Large \textbf{Classification of Associative Algebras Satisfying Quadratic Polynomial Identities}}\\[4mm]
{\large \textbf{Josimar da Silva Rocha $^\mathrm{a^*}$}}\\[2mm]
\end{flushleft}
\footnote{\emph{\hspace*{-0.65cm}{\\{$^\mathrm{a}${\footnote \it  Department of Mathematics, Universidade Tecnol\'ogica Federal do Paran\'a, \\ Corn\'elio Proc\'opio Campus, Corn\'elio Proc\'opio,  Paran\'a,   Brazil.}}}\\
{\it*Corresponding author: E-mail: jrocha@utfpr.edu.br;}}}
{\rule{\linewidth}{1pt}}\vspace*{-0.4cm}
\section*{{\fontsize{14}{1}\selectfont Abstract}}
\footnotesize In quantum mechanics, associative algebras play an important role in understanding symmetries~ and~ operator~ algebras,~ providing~ new algebraic frameworks for describing physical systems. This work classifies associative algebras over a field $K$ that are generated by a finite set $G$ and satisfy a polynomial identity of the form $X^{2} = aX+b,$ where $a$ and $b$ are elements of $K$ and $X$ varies either over all elements of the algebra or over all elements of the multiplicative semigroup $S$ generated by $G.$  One of the results obtained in this work shows that algebras satisfying $X^{2}=0$ over fields of characteristics different from 2 are nilpotent of index  3.\\
\noindent The results obtained were validated computationally using the GAP system.\\[2mm]
\footnotesize{\it{Keywords:} Associative algebra; Polynomial identity; Nilpotency index; Nagata- \quad\indent\quad\indent ~Higman theorem.}\\[2mm]
\afterpage{
\fancyhead{} \fancyfoot{}
\fancyfoot[C]{\footnotesize\thepage}
\fancyhead[R]{\scriptsize\it {\textbf {}}\\
\\}}
\vspace{-.7 cm}
\section{Introduction}

Associative algebras play a crucial role in algebraic structures and have numerous applications in various fields of mathematics and physics. 
The Irish mathematician, physicist, and astronomer William Hamilton discovered the quaternions in 1843, marking one of the earliest examples of a non-commutative associative algebra \cite{Agore, Huang, Avila}. In a more general framework, Clifford algebras, used in mathematics, physics, engineering, and other fields, extend and generalize the real, complex, and quaternionic analysis. These are associative algebras \cite{Shlaka, Campos}.\\

\noindent Beyond their fundamental importance in algebraic structures, the classification of associative algebras that satisfy polynomial identities has applications in various other areas, as we can see in \cite{Lidl, bratteli, GAP4}. For example, such algebraic structures can contribute to coding theory, offering new ways to construct error-correcting codes with the aim of improving coding efficiency. In Quantum Mechanics, in Physics, these algebras can contribute to the understanding of symmetries and operator algebras to provide new algebraic approaches for physical systems. Furthermore, the classification and computational validation of algebras using the GAP system directly contribute to computational algebra, providing a basis for the development of algorithms and tools for symbolic computation.\\

\noindent When a field $F$ has characteristic 0 and $A$ is an $F$-algebra, we define $A$ as a nil-algebra if there exists a positive integer $n$ such that $a^{n}=0$ for every $a \in A.$ In this case, $n$ is called the nil-index of $A.$ An algebra $A$ is said to be nilpotent of index $m$ if any product of $m$ elements of $A$ is zero.\\

\noindent In \cite{nagata}, Nagata proved that if $A$ is a nil-algebra with nil-index $n,$ then $A$ is nilpotent of index $N(n)$ for some $N(n)\in \mathbb{N}.$ Higman \cite{higman} provided a refined result, showing that $n^{2} \leq N(n)\leq 2^{n}-1.$ Razmyslov \cite{razmyslov} improved Higman's upper bound to $N(n)\geq n^2, $ and Kuzmin \cite{kuzmin} enhanced the lower bound to $N(n)\geq \frac{n(n+1)}{2}.$  Woo Lee \cite{woo_lee} demonstrated that a nil-algebra $A$ with nil-index $3$ is nilpotent of index $6.$ \\

\noindent Subsequent developments expanded the theoretical landscape of PI-algebras. 
In particular, the monograph by \cite{GiambrunoZaicev} provides asymptotic methods for studying polynomial identities, while the edited volume by \cite{BelovRowen} explores the computational aspects of PI-theory, connecting the field with modern computational algebra systems such as \texttt{MAGMA} and \texttt{GAP}. 
Other recent works on algebras satisfying polynomial identities can be found in \cite{Bremner, drensky, Ouedraogo, Grishkov}.\\

\noindent This paper focuses on classifying and characterizing associative algebras generated by a finite set $G = \{ x_{1}, x_{2}, \cdots, x_{m} \}$ over a field $K$ that satisfy a quadratic polynomial identity of the form $X^2 = aX + b, $ where $a, b \in K$ and $X$ can vary over the elements of $A$ or over a multiplicative semigroup $S$ generated by $G$. 

\section{The Case $X^2=k$}
	
The next lemma restricts the case $X^{2} = k$ to the two possibilities $X^{2} = 0$ and $X^{2} = 1$:

\begin{lemma}
	Let $K$ be a field of characteristic $p\neq 2$, and let $S$ be the free multiplicative semigroup generated by the set $\{x_{1}, x_{2}, \cdot, x_{m}\}.$ Let $A$ be an associative $K$-algebra generated by $S,$ and suppose that every element $X \in S$ satisfies   \[ X^{2}= k\] for some $k \in K.$ Then $k\in \{0, 1\}. $
\end{lemma}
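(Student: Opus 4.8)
The plan is to exploit the one structural feature we are handed, namely that $S$ is a semigroup and therefore closed under multiplication: whenever $X\in S$ we also have $X^{2}=X\cdot X\in S$, so the standing identity applies to $X^{2}$ just as it does to $X$. First I would fix an arbitrary $X\in S$ (say $X=x_{1}$) and write $X^{2}=k$ with $k\in K$ the scalar supplied by hypothesis. Since $X^{2}$ again lies in $S$, applying the hypothesis to it gives $(X^{2})^{2}=k$ as well. Then I would compute $(X^{2})^{2}$ the other way: $(X^{2})^{2}=(X\cdot X)(X\cdot X)=X^{2}\cdot X^{2}=k\cdot k=k^{2}$, using $X^{2}=k$ in each factor. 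Comparing the two evaluations yields $k^{2}=k$, i.e. $k(k-1)=0$ in $K$; as $K$ is a field and hence an integral domain, this forces $k=0$ or $k=1$, which is the claim.

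In the write-up I would make two things explicit. One is that the hypothesis must be read as giving a single $k$ valid for every $X\in S$ at once: if $k$ were permitted to depend on $X$, the statement would be false — for example with $m=1$, $A=K$, and $x_{1}$ mapped to an arbitrary scalar $c$, one gets $x_{1}^{2}=c^{2}$, which need not be $0$ or $1$. The other is that freeness of $S$ plays no real role beyond guaranteeing that $S$ embeds in $A$ as a multiplicatively closed set (so that $X^{2}\in S$), and that the characteristic hypothesis $p\neq 2$ is actually unused in this particular reduction; it is kept as a standing assumption for the section because the later passage from $X^{2}=aX+b$ to the normalized forms needs $2$ to be invertible.

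I do not expect a genuine obstacle. The only point requiring a little care is the realization that the identity should be applied to the composite element $X^{2}$, not just to the generators $x_{i}$, together with the bookkeeping of which occurrences of $X^{2}$ are replaced by $k$ when expanding $(X^{2})^{2}$. Once that is arranged, the relation $k^{2}=k$ and the absence of zero divisors in $K$ finish the proof in one line.
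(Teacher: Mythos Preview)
Your argument is correct and is exactly the paper's: the paper's entire proof is the single line ``Since $k=(x^{2})^{2}=k^{2}$, then $k\in\{0,1\}$,'' which is precisely your computation of $(X^{2})^{2}$ two ways. Your side remarks---that $p\neq 2$ is not actually used here and that a single $k$ must be meant---are accurate observations the paper leaves implicit.
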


\begin{proof} Since $k = (x^2)^2 = k^2$, then $k \in \{0, 1\}$.
\end{proof}

\subsection{The case $X^2 = 0$}

In this subsection, we show that the cases in which every element of the algebra satisfies 
$X^{2}=0$  exhibit different properties depending on whether the characteristic of the algebra is 2 or different from 2, as we shall see in the following theorems.

\begin{theorem}
	Let $K$ be a field of characteristic $p\neq 2$, and let $S$ be the free multiplicative semigroup generated by the set $\{x_{1}, x_{2}, \cdot, x_{m}\}.$ Let $A$ be an associative $K$-algebra generated by $S,$ and suppose that every element $X \in A$ satisfies  \[ X^2 = 0. \] Then $A$ is nilpotent of index at most 3 and  $\displaystyle \dim_{K}(A) \leq  \frac{m(m+1)}{2}. $  
\end{theorem}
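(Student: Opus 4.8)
The plan is to linearize the quadratic identity into a bilinear relation, then push that relation onto triple products, and finally read off the dimension bound from nilpotency together with the relations among the generators. The first step is linearization: for arbitrary $u,v\in A$, applying $X^{2}=0$ to $X=u+v$ gives $0=(u+v)^{2}=u^{2}+uv+vu+v^{2}=uv+vu$, so
\[ uv=-vu \qquad\text{for all } u,v\in A. \]
Note that this step does not use $\mathrm{char}(K)\neq 2$.

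Next I would show that every product of three elements of $A$ is zero. Fix $u,v,w\in A$. Regarding $uv$ as a single element of $A$ and using the displayed relation, $uvw=(uv)w=-w(uv)=-wuv$. Applying the displayed relation twice in the other grouping, $uvw=u(vw)=-(vw)u=-vwu$ and $vwu=v(wu)=-(wu)v=-wuv$, hence $uvw=wuv$. Comparing the two expressions gives $uvw=-uvw$, and since $\mathrm{char}(K)\neq 2$ this forces $uvw=0$. Thus $A^{3}=0$, i.e.\ $A$ is nilpotent of index at most $3$.

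For the dimension estimate, observe first that the hypothesis rules out a unit in $A$ (a unit would satisfy $1^{2}=1\neq 0$ unless $A=0$), so the algebra generated by $S$ is exactly the $K$-span of the words in $x_{1},\dots,x_{m}$. By the previous step every word of length $\geq 3$ vanishes, so $A$ is spanned by $\{x_{i}:1\le i\le m\}\cup\{x_{i}x_{j}:1\le i,j\le m\}$; the relations $x_{i}^{2}=0$ and $x_{i}x_{j}=-x_{j}x_{i}$ then reduce this to $\{x_{i}:1\le i\le m\}\cup\{x_{i}x_{j}:1\le i<j\le m\}$, a spanning set of cardinality $m+\binom{m}{2}=\frac{m(m+1)}{2}$. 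Hence $\dim_{K}(A)\le\frac{m(m+1)}{2}$.

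I do not anticipate a genuine obstacle: the heart of the argument is the classical linearization trick, and everything after it is routine. The only essential use of the hypothesis $\mathrm{char}(K)\neq 2$ is the cancellation $2\,uvw=0\Rightarrow uvw=0$ in the second step, which is precisely why the characteristic-$2$ case behaves differently and must be handled separately. The one point that deserves a careful sentence is the identification in the third step of $A$ with the linear span of the words in the generators (equivalently, the non-unitality of $A$), since it is this identification that converts the nilpotency conclusion into the stated bound on $\dim_{K}(A)$.
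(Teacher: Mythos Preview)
Your proof is correct and follows essentially the same route as the paper: linearize $X^{2}=0$ to obtain $uv=-vu$, combine two applications of this anticommutativity on triple products to force $2\,uvw=0$, cancel the $2$ using $\mathrm{char}(K)\neq 2$, and then read off the spanning set $\{x_i\}\cup\{x_ix_j:i<j\}$. The only cosmetic difference is that the paper derives the relation $xyz=-yzx$ by expanding $(x+yz)^{2}=0$ directly, whereas you obtain it by applying the already-linearized identity to the pair $(uv,w)$; these are the same computation.
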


\begin{proof}
	Let $x, y, z \in A$, then	
\begin{equation} \label{1x2v}  
(x + y)^{2} = x^{2} + xy + yx + y^{2} = 0 \Rightarrow xy = -yx 
\end{equation}
	
\noindent and

	\begin{equation}\label{2x2v}  (x + yz)^{2} = x^{2} + xyz + yzx + (yz)^{2}  = xyz + yzx = 0 \Rightarrow xyz = - yzx \end{equation}

\noindent Now observe that
	
	\begin{equation} \label{3x2v}  xyz \stackrel{(\ref{1x2v})}{=} -(yxz) \stackrel{(\ref{1x2v})}{=} -(y(-zx)) = yzx \end{equation} 
	
\noindent By (\ref{3x2v}) and (\ref{2x2v}), we obtain that 

	\begin{equation}  2xyz = 0.  \end{equation}

\noindent As $A$ is an Algebra of characteristic $p\neq 2$ and $2xyz=0, $ it follows that $xyz=0.$ Therefore $A$ is nilpotent of index at most 3. \\

\noindent By \eqref{1x2v}, \eqref{2x2v} and \eqref{3x2v}, it  follows that the basis of $A$ is a subset of

	\[ \left(\bigcup_{k=1}^{m} \{ x_{k}\}\right)\bigcup \left(\bigcup_{1\leq i < j \leq m} \{ x_{i}x_{j} \}\right) \]

 \noindent which has $\frac{m(m+1)}{2}$ elements. Hence, the dimension of  $A$ is at most $\frac{m(m+1)}{2}.$ 
	
\end{proof}

\begin{example}\label{e1}
	
Let $K$ be a field of characteristic $p\neq 2$, and let $S$ be the free multiplicative semigroup generated by the set $\{x, y\}.$ Let $A$ be an associative $K$-algebra generated by $S,$ and suppose that every element $X \in A$ satisfies  $X^2 = 0$.
	
\noindent	Thus,
	
	\[ (x + y)^{2} = x^{2} + y^{2} + xy + yx = 0 \Rightarrow xy = -yx \]
	
	\[ (x + xy)^{2} = x^{2} + (xy)^{2} + x^{2}y + xyx  = 0 \Rightarrow xyx = 0 \]

\noindent 	Indeed, we obtain that words of length greater than 2 in the semigroup $S$ generated by $x$ and $y$ are equal to zero.\\
	
	\noindent In this case, any element $w \in A$ can be written as
	
	\[ w = ax + by + cxy, \]
	
\noindent where $a, b, c \in K. $\\

\noindent 	In this case, we can take

	\[ x = \left( \begin{array}{rrrr} 0 & 1 & 0 & 0 \\
		0 & 0 &  0 & 0 \\
		0 & 0 & 0  & 1 \\
		0 & 0 & 0 & 0 \\
	\end{array} \right) \]
	
\noindent and

	\[ y = \left( \begin{array}{rrrr} 0 & 0 & 1 & 0 \\
		0 & 0 & 0 & -1 \\
		0 & 0 & 0  & 0\\
		0 & 0 & 0 & 0 \\
	\end{array}\right), \]
	
\noindent as we can verify by running the code in GAP found in Appendix A.

\end{example}

\begin{theorem}
Let $K$ be a field of characteristic 2, and let $S$ be the free multiplicative semigroup generated by the set $\{x_{1}, x_{2}, \cdot, x_{m}\}.$ Let $A$ be an associative $K$-algebra generated by $S,$ and suppose that every element $X \in A$ satisfies \[ X^2 = 0.\] \\
	
 \noindent Then $A$ is  is nilpotent of index at most $m + 1$ and $\dim_{K}(A) \leq 2^m - 1$.
\end{theorem}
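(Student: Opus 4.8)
The plan is to exploit the fact that in characteristic $2$ the identity $X^{2}=0$ forces $A$ to be commutative, after which the whole statement reduces to elementary bookkeeping with squarefree monomials.

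First I would establish commutativity. For $x,y\in A$ we have $(x+y)^{2}=x^{2}+xy+yx+y^{2}=xy+yx=0$, and since $-1=1$ in $K$ this gives $xy=yx$. In particular each generator satisfies $x_{i}^{2}=0$ and all the generators commute with one another.

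Next I would exhibit a small spanning set. Because $S$ is already a multiplicative semigroup, the subalgebra of $A$ it generates is simply $\operatorname{span}_{K}(S)$, so it suffices to understand the image in $A$ of an arbitrary word $w=x_{i_{1}}x_{i_{2}}\cdots x_{i_{\ell}}$. Using commutativity we may sort its letters; if some index occurs twice, then bringing the two occurrences together produces a factor $x_{j}^{2}=0$, so $w=0$ in $A$. Otherwise $w$ equals the sorted squarefree monomial $x_{j_{1}}x_{j_{2}}\cdots x_{j_{k}}$ with $j_{1}<\cdots<j_{k}$, which is determined by the nonempty subset $\{j_{1},\dots,j_{k}\}\subseteq\{1,\dots,m\}$. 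Hence $A$ is spanned by at most $2^{m}-1$ elements, one for each nonempty subset of $\{1,\dots,m\}$, and therefore $\dim_{K}(A)\le 2^{m}-1$. (Note there is no empty monomial in the spanning set, since $A$ is generated by the semigroup $S$ and need not be unital.)

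Finally, for the nilpotency index I would take an arbitrary product of $m+1$ elements of $A$, expand it by multilinearity into a sum of products of $m+1$ squarefree monomials, and observe that the total number of generator-letters appearing in such a product is at least $m+1$; by the pigeonhole principle some generator occurs at least twice, so as above the product vanishes. Thus every product of $m+1$ elements of $A$ is zero, i.e. $A$ is nilpotent of index at most $m+1$. I do not anticipate a real obstacle once commutativity is in hand: the remaining steps are straightforward counting, and the only point needing a little care is to keep track of the fact that $A$ is not assumed unital, so that the empty word is excluded and the dimension bound is $2^{m}-1$ rather than $2^{m}$.
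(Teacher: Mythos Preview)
Your argument is correct and follows the same approach as the paper: use $(x+y)^2=0$ in characteristic~$2$ to obtain commutativity, and then reduce everything to squarefree monomials in the generators. In fact your write-up is more complete than the paper's, which establishes the spanning set and the dimension bound but never explicitly spells out the pigeonhole step for the nilpotency index.
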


\begin{proof} 
	By \eqref{1x2v}, we obtain that a basis of $A$ over a field $F$ of characteristic 2, satisfying $X^2 = 0$, $\forall X \in A$, is formed by elements of the form $x_1^{\xi_1} x_2^{\xi_2} \dots x_m^{\xi_m}$, where $\xi_{1}, \dots, \xi_{m} \in \{0, 1\}$ and not all $\xi_i$ are zero. Therefore, $\dim_{K}(A) \leq 2^m - 1$.
\end{proof}

\subsection{The case $X^{2} = 1$}
	
The following result shows that an algebra generated by a finitely generated semigroup, in which every element of this semigroup satisfies the equation 
$X^{2}=1$ is abelian, and its dimension depends on the number of generators of the semigroup:	
\begin{theorem}
	Let $K$ be a field, and let $S$ be the free multiplicative semigroup generated by the set $\{x_{1}, x_{2}, \cdot, x_{m}\}.$ Let $A$ be an associative $K$-algebra generated by $S,$ and suppose that every element $X \in S$ satisfies \[ X^2 = 1.\] Then $A$ is an Abelian algebra and $\dim_{K}(A) = 2^m$.
\end{theorem}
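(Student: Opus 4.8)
The plan is to extract commutativity of $A$ from the hypothesis on length‑two words, use it to produce a spanning set of size $2^m$, and then realize that bound by an explicit matrix representation, thereby identifying $A$ with the group algebra of an elementary abelian $2$‑group.

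First I would use only the relations coming from words of length $1$ and $2$. Since each $x_i \in S$, we have $x_i^{2}=1$, so every generator is a unit equal to its own inverse. For $i \neq j$ the word $x_i x_j$ lies in $S$, hence $x_i x_j x_i x_j = (x_i x_j)^{2}=1$; left‑multiplying by $x_i$ and using $x_i^{2}=1$ yields $x_j x_i x_j = x_i$, and then right‑multiplying by $x_j$ and using $x_j^{2}=1$ yields $x_j x_i = x_i x_j$. Thus the generators commute pairwise, and since $A$ is generated as an algebra by $S$ (whose elements are products of the $x_i$), $A$ is commutative. Consequently every element of $S$ equals, in $A$, a monomial $x_1^{\xi_1} x_2^{\xi_2}\cdots x_m^{\xi_m}$ with $\xi_i \in \{0,1\}$; the set of these $2^m$ monomials is closed under multiplication because $x_i^{2}=1$, and it spans $A$, so $\dim_K A \le 2^m$ and $A$ is a quotient of the group algebra $K[(\mathbb{Z}/2\mathbb{Z})^m]$.

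For the reverse inequality I would exhibit a faithful model. Let $M = \left(\begin{smallmatrix} 0 & 1 \\ 1 & 0 \end{smallmatrix}\right)\in M_2(K)$, so $M^2 = I_2$, and inside $M_2(K)^{\otimes m} \cong M_{2^m}(K)$ put $X_i = I_2 \otimes \cdots \otimes M \otimes \cdots \otimes I_2$ with $M$ in the $i$‑th slot. The $X_i$ commute and satisfy $X_i^2 = I$, so any word $w \in S$ maps to $X_1^{b_1}\cdots X_m^{b_m}$ and $w^2$ maps to $I$; hence $x_i \mapsto X_i$ defines an algebra homomorphism from the free associative algebra on $x_1,\dots,x_m$ modulo the relations $\{w^2 - 1 : w \in S\}$ onto the subalgebra generated by the $X_i$. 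That subalgebra contains the $2^m$ matrices $M^{\xi_1}\otimes\cdots\otimes M^{\xi_m}$, which are $K$‑linearly independent because $\{I_2, M\}$ is linearly independent in $M_2(K)$ and tensor products of linearly independent families are linearly independent. Hence the free such algebra has dimension at least $2^m$, and with the upper bound it has dimension exactly $2^m$ and is isomorphic to $K[(\mathbb{Z}/2\mathbb{Z})^m]$.

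The step I expect to be delicate is precisely this matching of the two bounds. The commutativity argument and the spanning set only give $\dim_K A \le 2^m$ for every $A$ satisfying the hypotheses — and this cannot be improved in general, since $A = K$ with all $x_i = 1$ is allowed — so the equality $\dim_K A = 2^m$ has to be read as a statement about the free (universal) algebra generated by $S$ in which every element of $S$ squares to $1$. The real content is then that the defining ideal $\bigl(\{w^2 - 1 : w \in S\}\bigr)$ of the free associative algebra contains all the commutators $x_i x_j - x_j x_i$ (shown in the first step) and nothing beyond the defining ideal of $K[(\mathbb{Z}/2\mathbb{Z})^m]$ — which is exactly what the matrix model above certifies.
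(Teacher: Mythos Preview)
Your argument is correct and follows essentially the same route as the paper: commutativity is extracted from $(x_ix_j)^2=1$ together with $x_i^2=x_j^2=1$ (the paper writes it as the one-liner $yx = x^2yxy^2 = x(xy)^2y = xy$), and then the $2^m$ monomials $x_1^{\xi_1}\cdots x_m^{\xi_m}$ span. The one place you go further than the paper is the lower bound: the paper simply asserts $\dim_K(A)=2^m$ without justifying linear independence, whereas you supply the tensor-product matrix model $X_i = I_2\otimes\cdots\otimes M\otimes\cdots\otimes I_2$ (the paper only writes these matrices out for $m=2$ in Example~\ref{e2}) and you correctly flag that the equality must be read as a statement about the universal such algebra, since degenerate quotients like $A=K$ also satisfy the hypotheses. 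So your proof is the same in spirit but more careful on the point the paper glosses over.
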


\begin{proof} First, observe that  $yx = x^{2}yxy^{2} = x(xy)^{2}y = xy, \forall x, y \in \{x_{1}, \cdots, x_{m}\}.$ \\
	
	\noindent Therefore, the generators of $A$ are of the form $x_{1}^{\xi_{1}} x_{2}^{\xi_{2}} \cdots x_{m}^{\xi_{m}}, $ where $\xi_{1}, \cdots, \xi_{m} \in \{0, 1\}.$ \\
	
	\noindent Thus, $\dim_{K}(A) = 2^{m}.$ 

\end{proof}

\begin{example}\label{e2}
	
  Let $K$ be a field,  and let $S$ be the free multiplicative semigroup generated by the set $\{x, y\}.$ Let $A$ be an associative $K$-algebra generated by $S,$ and suppose that every element $X \in S$ satisfies \[ X^2 = 1.\] Then there exists a basis $\{1, x, y, xy\}$ where any element $w \in A$ can be written as

	\[ w = \alpha_{1} + \alpha_{2}x + \alpha_{3}y + \alpha_{4}xy, \]
	
\noindent where $\alpha_{1}, \alpha_{2},  \alpha_{3},  \alpha_{4} \in K.$

	\noindent If $w_{1} = \alpha_{1} + \alpha_{2}x + \alpha_{3} y + \alpha_{4}xy $ and $w_{2} = \beta_{1} + \beta_{2}x + \beta_{3}y + \beta_{4}xy, $ then
	
	\[ \begin{array}{lll} w_{1}w_{2}   & = & (\alpha_{1}\beta_{1} + \alpha_{2}\beta_{2} + \alpha_{3}\beta_{3} + \alpha_{4}\beta_{4}) \\
		& + & (\alpha_{1}\beta_{2} + \alpha_{2}\beta_{1} + \alpha_{3}\beta_{4} + \alpha_{4}\beta_{3})x \\
		& + & (\alpha_{1}\beta_{3} + \alpha_{2}\beta_{4} + \alpha_{3}\beta_{1} + \alpha_{4}\beta_{2})y \\
		& + & (\alpha_{1}\beta_{4} + \alpha_{2}\beta_{3} + \alpha_{3}\beta_{2} + \alpha_{4} \beta_{1})xy  \end{array} \]

\noindent In this case, we can take
	
	\[ x = \left(\begin{array}{rrrr} 0 & 1 & 0 & 0 \\
		1 & 0 & 0 & 0 \\
		0 & 0 & 0 & 1 \\
		0 & 0 & 1 & 0 \\
	\end{array} \right) \]
	
\noindent and
	\[ y = \left( \begin{array}{rrrr}
		0 & 0 & 1 & 0 \\
		0 & 0 & 0 & 1 \\
		1 & 0 & 0 & 0 \\
		0 & 1 & 0 & 0 \\
	\end{array} \right), \]
	
\noindent as we can verify by running the GAP code from Appendix B.
\end{example}

\section{The Case $X^{2} = kX$}

The following lemma restricts the case 

$X^{2} = kX$ to the two subcases 
$X^{2} = 0$ and $X^{2} = X,$  where the case 
$X^{2} = 0$ has already been analyzed in Subsection 2.1:
	
\begin{lemma}  Let $K$ be a field, and let $S$ be the free multiplicative semigroup generated by the set $\{x_{1}, x_{2}, \cdot, x_{m}\}.$ Let $A$ be an associative $K$-algebra generated by $S,$ and suppose that every element $X \in S$ satisfies \[ X^2 = kX\] for some $k\in K.$  Then $k \in \{0, 1\}$.
\end{lemma}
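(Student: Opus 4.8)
The plan is to reuse the device from the proof of the earlier lemma for the case $X^{2}=k$: if $w$ is any word in $S$, then $w^{2}$ is again a word in $S$, so the quadratic identity may be applied both to $w$ and to $w^{2}$, and comparing the two resulting expressions for $w^{4}$ produces a polynomial constraint on $k$.

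Concretely, I would first set $x = x_{1}\in S$ and record the instance $x^{2}=kx$ of the hypothesis. Since $x^{2}=x_{1}x_{1}$ is itself an element of the free semigroup $S$, the hypothesis also gives $(x^{2})^{2}=k\,x^{2}$. I would then compute $x^{4}=(x^{2})^{2}$ in two ways. On one hand, substituting $x^{2}=kx$ gives $(x^{2})^{2}=(kx)^{2}=k^{2}x^{2}=k^{3}x$; on the other hand, the instance of the identity at the word $x^{2}$ gives $(x^{2})^{2}=k\,x^{2}=k^{2}x$. Equating the two expressions,
\[
k^{3}x=k^{2}x,\qquad\text{equivalently}\qquad k^{2}(k-1)\,x=0 .
\]
Because $K$ is a field, $A$ is a $K$-vector space; hence, as soon as $x\neq 0$ in $A$, the scalar $k^{2}(k-1)$ must vanish, and since a field has no zero divisors this forces $k=0$ or $k=1$.

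The only point requiring care is the degenerate case in which $x_{1}=0$ in $A$. I would dispose of it by observing that the computation above is valid verbatim with $x_{1}$ replaced by any generator $x_{i}$ that is nonzero in $A$, and that if every $x_{i}$ is zero in $A$ then every element of $S$ is zero, so the identity $X^{2}=kX$ imposes no restriction at all and one may simply take $k=0\in\{0,1\}$. In all cases, therefore, $k\in\{0,1\}$. I expect this bookkeeping about the trivial algebra to be the only minor obstacle; the substance of the argument is the two-line computation above, exactly parallel to the one already used for $X^{2}=k$. Together with Subsection 2.1 (which handles $X^{2}=0$), this reduces the study of the identity $X^{2}=kX$ to the single remaining case $X^{2}=X$.
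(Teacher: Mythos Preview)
Your proof is correct and follows essentially the same approach as the paper: both compute $(x^{2})^{2}$ in two ways---once as $(kx)^{2}=k^{3}x$ and once as $k\,x^{2}=k^{2}x$---and conclude $k^{2}(k-1)x=0$. You are slightly more careful than the paper in explicitly handling the degenerate case $x=0$, but otherwise the arguments are identical.
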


\begin{proof} Since $X^{2} = kX$ for every $X \in S$, it follows that
\begin{equation}\label{eq:first}
(x^{2})^{2} = kx^{2} = k^{2}x.
\end{equation}
On the other hand, we also have
\begin{equation}\label{eq:second}
(x^{2})^{2} = (kx)^{2} = k^{2}x^{2} = k^{3}x.
\end{equation}

\noindent Comparing equations~(\ref{eq:first}) and~(\ref{eq:second}), we obtain
\begin{equation}\label{eq:third}
k^{2}x = k^{3}x,
\end{equation}

\noindent which implies that

\begin{equation}\label{eq:fourth}
k^{2}(k - 1)x = 0.
\end{equation}

\noindent Therefore, $k \in \{0, 1\}.$
\end{proof}

\subsection{The  case $X^{2} = X$}

In this subsection, we classify algebras in two distinct cases: when every element of the algebra satisfies the equation \(X^{2} = X\), and when every element of the generating semigroup of the algebra satisfies this equation, as we shall see in the following theorems.
	
\begin{theorem}\label{x2=x} Let $K$ be a field, and let $S$ be the free multiplicative semigroup generated by the set $\{x, y\}.$ Let $A$ be an associative $K$-algebra generated by $S,$ and suppose that every element $X \in A$ satisfies
	\[
	X^2 = X.
	\]
	
\noindent  Then:

	\begin{itemize}
		\item[(i)] $A$ is abelian;
		\item[(ii)] $A$ has characteristic $2$;
		\item[(iii)] $\dim_K(A) \leq 3.$
	\end{itemize}
\end{theorem}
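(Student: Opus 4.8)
The plan is to exploit the idempotency hypothesis $X^2=X$ by evaluating it on a few cleverly chosen elements of $A$, mimicking the classical argument that a ring all of whose elements are idempotent is commutative of characteristic $2$, and then to use the generation of $A$ by $\{x,y\}$ together with the relations $x^2=x$ and $y^2=y$ to bound the dimension. Nothing deep is needed; the work is in being careful about what "every element of $A$" buys us.

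First I would settle (ii). For an arbitrary $a\in A$, the element $a+a$ again lies in $A$, so $(a+a)^2=a+a$. Expanding the left-hand side and using $a^2=a$ gives $4a=2a$, hence $2a=0$ for every $a\in A$. Assuming $A\neq\{0\}$ (the claim being vacuous otherwise), choose $0\neq a\in A$: if $\operatorname{char}(K)\neq 2$ then $2$ is invertible in the field $K$, so $a=2^{-1}(2a)=0$, a contradiction. Therefore $\operatorname{char}(K)=2$. (In fact the same device applied to $\lambda a$ for $\lambda\in K$ forces $\lambda^2=\lambda$, so $K=\mathbb{F}_2$, but only $\operatorname{char}(K)=2$ is needed here.)

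Next I would prove (i). For $a,b\in A$ we have $a+b\in A$, so $(a+b)^2=a+b$; expanding and cancelling $a^2=a$ and $b^2=b$ yields $ab+ba=0$, i.e. $ab=-ba$. Since $\operatorname{char}(K)=2$, $-ba=ba$, hence $ab=ba$ and $A$ is abelian. For (iii): since $A$ is generated as an algebra by the semigroup $S=\langle x,y\rangle$, it is spanned over $K$ by all finite products of the generators $x$ and $y$. By (i) each such product can be sorted into the form $x^i y^j$ with $i,j\geq 0$ and $i+j\geq 1$; and from $x^2=x$, $y^2=y$ one gets $x^i=x$ for $i\geq 1$ and $y^j=y$ for $j\geq 1$ by an immediate induction. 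Hence every product of generators lies in $\{x,\,y,\,xy\}$, so this three-element set spans $A$ and $\dim_K(A)\leq 3$.

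I do not expect a genuine obstacle. The only points requiring a little care are: (a) $S$ is a semigroup, not a monoid, so no unit element enters the spanning set — this is what keeps the bound at $3$ rather than $4$; and (b) the degenerate case $A=\{0\}$ must be excluded (or treated trivially) before deducing the characteristic, since the division by $2$ step needs a nonzero element to contradict.
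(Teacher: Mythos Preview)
Your proof is correct and follows essentially the same approach as the paper: first use $(2a)^2=2a$ (equivalently $(a+a)^2=a+a$) to obtain $2a=0$ and hence characteristic~$2$, then expand $(a+b)^2=a+b$ to get $ab+ba=0$ and thus commutativity, and finally reduce every word in the generators to one of $x$, $y$, or $xy$ via commutativity and $x^2=x$, $y^2=y$. Your treatment is slightly more explicit about the edge case $A=\{0\}$ and about why commutativity applies to all of $A$ (the paper reuses the symbols $x,y$ for arbitrary elements), and your side remark that in fact $K=\mathbb{F}_2$ is a correct bonus observation, but the core argument is the same.
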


\begin{proof} 
	Let $w \in A$. Thus, $(2w)^2 = 4w$ and $(2w)^2 = 2w$, which implies $2w = 0$. Therefore, $A$ has characteristic 2.

	\noindent Since

	\begin{equation}
		(x + y)^{2} = x + y
	\end{equation}

\noindent and
	
	\begin{equation}
		(x + y)^{2} = x^{2} + y^{2} + xy + yx = x + y + xy + yx
	\end{equation}
	
\noindent then

	\begin{equation} \label{eqcom} 
		xy = yx.
	\end{equation}

\noindent Therefore, $A$ is an Abelian algebra.\\
	
	\noindent By \eqref{eqcom} and by definition of $A,$ it follows that a basis for $A$ is a subset of  $\{x, y, xy\}.$ Thus, $\dim_{K}(A)\leq 3.$
	
\end{proof}

\begin{example} \label{e3} The  associative  $\mathbb{Z}_{2}$-algebra  generated by $x$ and $y$, where $x = \left[ \begin{array}{rrr} 1 & 0 & 0 \\
		0 & 0 & 1 \\ 0 & 0 & 1 \end{array} \right] $ and $y = \left[ \begin{array}{rrr} 0 & 0 & 1 \\ 0 & 1 & 0 \\ 0 & 0 & 1 \end{array} \right]$ \\
	
\noindent satisfies the hypotheses of Theorem \ref{x2=x} and has  dimension $3$ as a vector space, as we verify by running the GAP code from Appendix C. 

\end{example}

\begin{theorem}\label{t32} Let $K$ be a field, and let $S$ be the free multiplicative semigroup generated by the set $\{x_{1}, x_{2}, \cdot, x_{m}\}.$ Let $A$ be an associative $K$-algebra generated by $S,$ and suppose that every element $X \in A$ satisfies \[ X^2 = X.\] Then:
	
	\begin{itemize}
		\item $\mathcal{A}$ is abelian;
		\item $A$ has characteristic 2;
		\item $\dim_{K}(A) \leq  2^{m}-1;$ 
	\end{itemize}
\end{theorem}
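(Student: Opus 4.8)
The plan is to run the same three-step argument used for the two-generator case in Theorem~\ref{x2=x}, now with an arbitrary finite generating set $\{x_{1},\dots,x_{m}\}$. First I would pin down the characteristic: for every $w\in A$ the element $2w$ also lies in $A$ and satisfies the identity, so $2w=(2w)^{2}=4w^{2}=4w$, whence $2w=0$ for all $w\in A$; since $A$ is a nonzero $K$-vector space, this forces $\operatorname{char} K=2$.

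Second, I would establish commutativity exactly as in the $m=2$ case: for $x,y\in A$,
\[
x+y=(x+y)^{2}=x^{2}+xy+yx+y^{2}=x+y+xy+yx,
\]
so $xy+yx=0$, and combining this with $\operatorname{char} K=2$ gives $xy=yx$. Hence $A$ is abelian (indeed a Boolean $K$-algebra: commutative, of exponent $2$, with every element idempotent), which settles the first two bullets.

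Third, for the dimension bound I would normalise words in the generators. Because $A$ is generated as an algebra by the semigroup $S$, it is $K$-spanned by the products $x_{i_{1}}x_{i_{2}}\cdots x_{i_{k}}$ with $k\ge1$. Commutativity lets me sort the factors by index, and the relations $x_{i}^{2}=x_{i}$ (hence $x_{i}^{j}=x_{i}$ for all $j\ge1$) let me collapse repeated factors, so each such product equals a square-free monomial $\prod_{i\in T}x_{i}$ for some nonempty $T\subseteq\{1,\dots,m\}$. The $2^{m}-1$ monomials indexed by the nonempty subsets therefore span $A$, giving $\dim_{K}(A)\le 2^{m}-1$.

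I do not expect a serious obstacle: the whole proof is a direct generalisation of Theorem~\ref{x2=x}. The only point requiring a little care is the counting in the last step — one must note that the empty product (the unit) is \emph{not} among the spanning monomials, since $A$ is generated by the semigroup $S$ rather than by a monoid, so the bound is $2^{m}-1$ and not $2^{m}$, consistent with the value $3=2^{2}-1$ obtained for $m=2$.
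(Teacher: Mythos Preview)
Your proof is correct and matches the paper's approach: the paper simply cites Theorem~\ref{x2=x} for the characteristic-$2$ and abelian claims, then observes that a basis must lie inside the set of square-free nonempty monomials $x_{1}^{\alpha_{1}}\cdots x_{m}^{\alpha_{m}}$, which is exactly your normalisation argument. Your explicit remark on excluding the empty product (so the count is $2^{m}-1$, not $2^{m}$) is the same bookkeeping point the paper encodes via the condition $\alpha_{1}^{2}+\cdots+\alpha_{m}^{2}\neq 0$.
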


\begin{proof} By  Theorem \ref{x2=x}, it follows that $A$ is Abelian and $A$ has characteristic 2.
	As $A$ is Abelian and by definition of $A$, it follows that a basis of $A$ is a subset of \[  \{  x_{1}^{\alpha_1} x_{2}^{\alpha_2} \dots x_{m}^{\alpha_m}\mid  \alpha_{1}, \dots, \alpha_{m} \in \{0, 1\}, \alpha_{1}^{2} + \cdots + \alpha_{m}^{2} \neq 0 \} \] Thus, $A$ has dimension at most $2^m - 1$.
\end{proof}

\begin{example} \label{e4}The  associative  $\mathbb{Z}_{2}$-algebra  generated by $S = \langle x,  y,z\rangle,$ where 
\[ x = \left[\begin{array}{rrrrrrr} 1 & 0 & 0 & 0 & 0 & 0 & 0\\
0& 0& 0 & 1 & 0 & 0 & 0 \\
0 & 0 & 0 & 0 & 1 & 0 & 0 \\
0 & 0 & 0 & 1 & 0 & 0 & 0 \\
0& 0 & 0 & 0 & 1 & 0& 0 \\
0 & 0 & 0 & 0 & 0 & 0 & 1\\
0 & 0 & 0& 0 & 0 & 0 & 1\\
\end{array} \right],  	y=\left[ \begin{array}{rrrrrrr} 
0& 0 & 0 & 1 & 0 & 0 & 0\\
0 & 1 & 0 & 0 & 0 & 0 & 0\\
0& 0& 0& 0 & 0 & 1 & 0\\
0 & 0 & 0 & 1 & 0 & 0 & 0\\
0 & 0 & 0 & 0 & 0 & 0& 1\\
0 & 0 & 0 & 0 & 0 & 1 & 0\\
0 & 0 & 0 & 0 & 0 & 0 & 1\\
\end{array}\right]\] e 
$	z =\left[ \begin{array}{rrrrrrr} 
0 & 0 & 0 & 0 & 1 & 0 & 0\\
0 & 0 & 0 & 0 & 0 & 1 & 0\\
0 & 0 & 1 & 0 & 0 & 0 & 0\\
0 & 0 & 0 & 0 & 0 & 0 & 1\\
0 & 0 & 0& 0 & 1 & 0 & 0\\
0 & 0 & 0 & 0 & 0 & 1 & 0\\
0 & 0 & 0 & 0 & 0 & 0 & 1\\
\end{array}\right]$ 

\noindent  satisfies the hypotheses of Theorem \ref{t32} and has  dimension $7$ as a vector space, as we verify by running the GAP code from Appendix D. 
\end{example}

\begin{theorem}\label{t33} Let $K$ be a field, and let $S$ be the free multiplicative semigroup generated by the set $\{x, y\}.$ Let $A$ be an associative $K$-algebra generated by $S,$ and suppose that every element $X \in S$ satisfies \[ X^2 = X.\] 
	Then  $\dim_{K}(A) \leq 6$.
\end{theorem}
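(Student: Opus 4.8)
The plan is to show that the multiplicative image of $S$ inside $A$ is already contained in a six-element set, after which the bound on $\dim_{K}(A)$ is immediate. First I would record that, since $S$ is a multiplicative semigroup which generates $A$ as a $K$-algebra, $A$ is nothing but the $K$-linear span of (the image of) $S$; hence it suffices to prove that every word $w\in S$ equals, as an element of $A$, one of $x,\ y,\ xy,\ yx,\ xyx,\ yxy$. Equivalently, I will verify by hand that the quotient of the free semigroup on $\{x,y\}$ by the relations $X^{2}=X$ (for every element $X$) — that is, the free band on two generators — has at most six elements, and that these six words exhaust it.

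The reduction proceeds in two stages. In the first stage I use only the length-one relations $x^{2}=x$ and $y^{2}=y$: these give $x^{n}=x$ and $y^{n}=y$ for all $n\geq 1$, so collapsing every maximal block of equal consecutive letters shows that an arbitrary word of $S$ equals, in $A$, an \emph{alternating} word, i.e.\ one in which no two consecutive letters coincide. In the second stage I use that $xy$ and $yx$ also belong to $S$, so that $(xy)^{2}=xy$ and $(yx)^{2}=yx$, i.e.\ $xyxy=xy$ and $yxyx=yx$ in $A$. I then argue by induction on length that every alternating word equals one of the six listed words: the alternating words of length at most $3$ are exactly $x,y,xy,yx,xyx,yxy$, and an alternating word of length $n\geq 4$ begins with the block $xyxy$ or $yxyx$, which may be replaced by $xy$ respectively $yx$ using the relations just noted; this produces an alternating word of length $n-2$ representing the same element of $A$, to which the inductive hypothesis applies.

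Combining the two stages, the image of $S$ in $A$ lies in $\{x,y,xy,yx,xyx,yxy\}$, so $A$, being the $K$-span of this image, satisfies $\dim_{K}(A)\leq 6$. I do not expect a genuine obstacle: the only delicate point is the combinatorial bookkeeping in the word reduction, which is precisely the classical fact that the free band on two generators consists of the six elements $x,y,xy,yx,xyx,yxy$, with all longer alternating words folding back onto them. One expects the bound to be sharp, witnessed by an explicit $6$-dimensional matrix model over $\mathbb{Z}_{2}$ (checkable in GAP), in which no further collapse among the six canonical words occurs.
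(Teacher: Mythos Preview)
Your proposal is correct and follows essentially the same approach as the paper: both arguments rest on the observation that $\{x,y,xy,yx,xyx,yxy\}$ spans $A$, with the paper simply asserting this fact while you supply the explicit word-reduction (free-band-on-two-generators) argument justifying it. The paper likewise establishes sharpness via an explicit matrix model (checked over $\mathrm{GF}(5)$ rather than $\mathbb{Z}_{2}$, but the idea is the same).
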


\begin{proof}  It suffices to observe that the set $\{x, y, xy, yx, xyx, yxy\}$ generates the algebra $A$. Note that if $A = \langle x, y \rangle$, where
	
	\[ x = \left[ \begin{array}{lllllll}
		0 & 1 & 0 & 0 & 0 & 0 & 0 \\
		0 & 1 & 0 & 0 & 0 & 0 & 0 \\
		0 & 0 & 0 & 1 & 0 & 0 & 0 \\
		0 & 0 & 0 & 1 & 0 & 0 & 0 \\
		0 & 0 & 0 & 0 & 0 & 1 & 0 \\
		0 & 0 & 0 & 0 & 0 & 1 & 0 \\
		0 & 0 & 0 & 1 & 0 & 0 & 0 
	\end{array} \right] \]
	and
	\[ y =\left[ \begin{array}{lllllll}
		0 & 0 & 1 & 0 & 0 & 0 & 0 \\
		0 & 0 & 0 & 0 & 1 & 0 & 0 \\
		0 & 0  & 1  & 0 & 0 & 0 & 0 \\
		0 & 0 & 0 & 0 & 0 & 0 & 1 \\
		0 & 0 & 0 & 0 & 1 & 0 & 0 \\
		0 & 0 & 0 & 0 & 1 & 0 & 0 \\
		0 & 0 & 0 & 0 & 0 & 0 & 1 \\
	\end{array} \right], \]
	then  $A$ satisfies the conditions of the proposition, and $\beta = \{x, y, xy, yx, xyx, yxy\}$ is a basis for $A$ with $\dim_{K}(A) = 6$, as  we can verify by running the GAP code from Appendix E.
\end{proof}

\section{The Case $X^{2} = aX + b,$  with $a, b \in K^{\ast}$}

In this section, we restrict our attention  to the  associative algebras over $K$ that satisfies the equation quadratic equation
\[ X^{2} = aX+b,\]  where
$X$ ranges over the elements of a semigroup generated by a set $\{x_{1},\cdots x_{m}\}$ and $a, b \in K^{\ast}.$

\begin{theorem} Let $K$ be a field with characteristic $2$, and let $S$ be the free multiplicative semigroup generated by the set $\{x, y\}.$ Let $A$ be an associative $K$-algebra generated by $S,$ and suppose that every element $X \in S$ satisfies  \[ X^2 = X + 1. \] Then $A$ is the field $GF(4)$ with four elements $\{0, 1, x, 1 + x\}$.
\end{theorem}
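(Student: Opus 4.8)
The plan is to realize $A$ concretely as the algebra $K[x]$ modulo the single relation $x^{2}=x+1$, by showing that the second generator $y$ is already forced into the subalgebra generated by $x$. To that end I would first extract the elementary consequences of the hypothesis on the short words of $S$. From $x^{2}=x+1$ and $\operatorname{char}K=2$ one gets $x^{3}=x\cdot x^{2}=x(x+1)=x^{2}+x=1$, and likewise $y^{3}=1$; since $xy\in S$ satisfies $(xy)^{2}=xy+1$, the same computation gives $(xy)^{3}=(xy)(xy)^{2}=(xy)(xy+1)=1$. Hence $x,y,xy$ are units with $x^{-1}=x^{2}=x+1$, $y^{-1}=y+1$, $(xy)^{-1}=(xy)^{2}=xy+1$. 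Next I would observe that $F:=\{0,\,1,\,x,\,1+x\}$ is closed under addition (automatic in characteristic $2$) and under multiplication, since $x^{2}=1+x$, $x(1+x)=x+x^{2}=1$, and $(1+x)^{2}=1+x^{2}=x$; its three nonzero elements are invertible and the four elements are pairwise distinct (any coincidence would force $1=0$), so $F$ is a subfield of $A$ and $F\cong GF(2)[t]/(t^{2}+t+1)=GF(4)$.

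The heart of the argument is to prove $y\in F$, for then $A=F$ and we are done. Here I would use the relations attached to longer words of $S$. From $(xy)^{2}=xy+1$, that is $xyxy=xy+1$, multiplying on the right by $y$ and reducing with $y^{2}=y+1$ yields $xyx=x+y+1$ (and symmetrically $yxy=x+y+1$). Applying the hypothesis to $xyx\in S$, substituting $xyx=x+y+1$ into $(xyx)^{2}=xyx+1$, and expanding — using $(a+1)^{2}=a^{2}+1$ in characteristic $2$ and then $x^{2}=x+1$, $y^{2}=y+1$ — reduces to
\[ xy+yx=1. \]
On the other hand, equating the two descriptions $(xy)^{-1}=(xy)^{2}=xy+1$ and $(xy)^{-1}=y^{-1}x^{-1}=(y+1)(x+1)=yx+x+y+1$ gives $xy+yx=x+y$. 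Comparing the two identities forces $x+y=1$, i.e. $y=1+x\in F$. Hence $A$ is generated over $K$ by $x$ alone subject to $x^{2}+x+1=0$, and since $t^{2}+t+1$ is irreducible over $GF(2)$ and $\operatorname{char}K=2$, we conclude $A=F=\{0,1,x,1+x\}\cong GF(4)$.

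The step I expect to be the main obstacle is the middle one. The hypothesis attaches a quadratic relation to each of the infinitely many words of $S$, a heavily over-determined system, and the work is to single out a few short words ($xy$, $xyx$, and the unit identity above — or, alternatively, $x^{2}y$), to reduce each resulting quadratic to a linear relation inside $\operatorname{span}_{K}\{1,x,y,xy\}$ by repeated use of $x^{2}=x+1$ and $y^{2}=y+1$, and to arrive cleanly at the normal form $y=1+x$ while checking that the accumulated relations remain consistent. Once $y\in F$ is in hand, recognizing $F$ as $GF(4)$ is routine.
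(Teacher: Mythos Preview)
Your derivations are correct and the route differs from the paper's. The paper expands $(x^{2}y)^{2}$ in two ways to reach $yxy=xy+x+1$, then computes $y^{2}xy$ two ways to obtain $xy=yx+y$ and, by the $x\leftrightarrow y$ symmetry, $yx=xy+x$; adding these gives $x=y$. You instead extract $xyx=x+y+1$ from $(xy)^{2}=xy+1$, apply the hypothesis to the word $xyx$ to get $xy+yx=1$, and combine this with the inverse identity $(xy)^{-1}=y^{-1}x^{-1}$ (which yields $xy+yx=x+y$) to conclude $y=1+x$. The use of $(xy)^{-1}=y^{-1}x^{-1}$ is a clean shortcut the paper does not take.

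However, the very discrepancy between your conclusion $y=1+x$ and the paper's conclusion $y=x$ exposes a genuine gap that both arguments share. Both chains of equalities are valid consequences of the hypotheses, so in $A$ one has simultaneously $y=x$ and $y=1+x$, forcing $1=0$. Equivalently: once $y$ has been eliminated, $x^{3}=x(x+1)=x^{2}+x=1$, and since the word $x^{3}$ lies in $S$, the hypothesis demands $(x^{3})^{2}=x^{3}+1$, i.e.\ $1=0$. Thus $A$ collapses to the zero ring, not $GF(4)$. You rightly flagged ``checking that the accumulated relations remain consistent'' as the main obstacle; that check in fact fails, and neither your argument nor the paper's carries it out. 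The statement as written cannot have $A\cong GF(4)$ as its conclusion.
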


\begin{proof} We have
	\begin{equation} \label{comp1}
		(x^{2}y)^{2} = x^{2}y + 1 = (x + 1)y + 1 = xy + y + 1
	\end{equation}
	and
	\begin{equation} \label{comp2}
		\begin{array}{ll}
			(x^{2}y)^{2} & = ((x + 1)y)^{2} \\
			& = (xy + y)^{2} \\
			& = (xy)^{2} + y^{2} + xy^{2} + yxy \\
			& = xy + 1 + y + 1 + x(y + 1) + yxy \\
			& = 2xy + 2 + y +x +  yxy = y + x  + yxy\\
 \end{array}
\end{equation}
	By \eqref{comp1} and \eqref{comp2}, we have
	
	\begin{equation} \label{comp3}
		yxy = xy  + x + 1
	\end{equation}
	
	\begin{equation}\label{comp4}
		y^{2}xy = yxy + yx + y = xy + x + 1 + yx + y 
	\end{equation}
	
	\begin{equation}\label{comp5}
		y^{2}xy = (y + 1)xy = yxy + xy = 2xy + x + 1 = x + 1
	\end{equation}
	By (\ref{comp4}) and (\ref{comp5}), we have
	\begin{equation} \label{comp6}
		xy = yx + y
	\end{equation}

\noindent  Thus, by symmetry,

	\begin{equation}\label{comp7}
		yx = xy + x 
	\end{equation}

\noindent  By (\ref{comp6}) and (\ref{comp7}), we have that $x = y.$ 

\end{proof}

\begin{theorem}\label{t42} Let $K$ be a field with characteristic $p\neq 2,$ and let $S$ be the free multiplicative semigroup generated by the set $\{x, y\}.$ Let $A$ be an associative $K$-algebra generated by $S,$ and suppose that every element $X \in S$ satisfies  \[ X^2 = aX + b\]  with $a, b \in K^*$. Then
	
	\begin{itemize}
		\item[(i)] $a=2 $ and $b = -1;$
		\item[(ii)] $\{ 1, x, y, xy\} $ is a basis for $A$ as a vector space over $K$; 
		\item[(iii)] $\dim_{K}(A) = 4.$ 
	\end{itemize}
	
\end{theorem}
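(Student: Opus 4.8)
The plan is to first nail down the constants $a,b$ by working only with the powers of a single generator, then to produce a handful of ``straightening'' identities that reduce every monomial to a combination of $1,x,y,xy$, and finally to observe that this bound is attained. It is worth noting at the outset that the values we aim for, $a=2$ and $b=-1$, are precisely those making $X^{2}=aX+b$ equivalent to $(X-1)^{2}=0$; this is why the answer ends up structurally close to the $X^{2}=0$ situation of Subsection 2.1.

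\emph{Part (i).} Since $b\in K^{*}$, the relation $x^{2}=ax+b$ only makes sense if $A$ is unital, and then $K[x]=\operatorname{span}_{K}\{1,x\}$ because every higher power of $x$ reduces modulo $x^{2}=ax+b$; in particular $x^{3}=(a^{2}+b)x+ab$. As $x^{2}$ again lies in $S$, it also satisfies $(x^{2})^{2}=ax^{2}+b$. Expanding the left side as $(ax+b)^{2}=(a^{3}+2ab)x+(a^{2}b+b^{2})$ and comparing coefficients in $\{1,x\}$ --- the degenerate alternative, $\{1,x\}$ linearly dependent, makes $x$ a scalar and $A$ a quotient of $K[y]$, which cannot yield the claimed dimension $4$ --- gives $a^{3}+2ab=a^{2}$ and $a^{2}b+b^{2}=ab+b$. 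Dividing by $a$ and by $b$ and subtracting gives $b=-1$, whence $a^{2}-a-2=0$, i.e. $a\in\{2,-1\}$. To discard $a=-1$: then $a^{2}+b=0$ and $ab=1$, so $x^{3}=1$ in $A$; but $x^{3}\in S$, so $1=(x^{3})^{2}=ax^{3}+b=a+b=-2$, forcing $\operatorname{char}(K)=3$, and in characteristic $3$ one has $-1=2$, so ``$a=2$, $b=-1$'' holds anyway. This proves (i).

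\emph{Parts (ii) and (iii).} Now fix $a=2$, $b=-1$, so every monomial $W$ in $x,y$ obeys $W^{2}=2W-1$; in particular $x^{2}=2x-1$, $y^{2}=2y-1$, $(xy)^{2}=2xy-1$. Imitating the two-way computation of the preceding theorem, I evaluate $(x^{2}y)^{2}$ in two ways: directly it equals $2(x^{2}y)-1=4xy-2y-1$, while substituting $x^{2}y=2xy-y$ and squaring (using $(xy)^{2}=2xy-1$ and $y^{2}=2y-1$) gives $4xy+2x+2y-2yxy-5$. Equating and cancelling the factor $2$ (here $\operatorname{char}(K)\neq2$ is used) gives $yxy=x+2y-2$, and by the $x\leftrightarrow y$ symmetry $xyx=2x+y-2$. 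Left-multiplying $xyx=2x+y-2$ by $x$ and applying $x^{2}=2x-1$ then yields $yx=-xy+2x+2y-2$. The three relations $x^{2}=2x-1$, $y^{2}=2y-1$, $yx=-xy+2x+2y-2$ suffice to straighten an arbitrary word in $x,y$: the last one pushes every $x$ to the left of every $y$ modulo strictly shorter words (an induction on length and on the number of $yx$-inversions), leaving monomials $x^{i}y^{j}$, which the first two collapse into $K$-combinations of $1,x,y,xy$. Hence $\{1,x,y,xy\}$ spans $A$ and $\dim_{K}(A)\le4$. For the reverse inequality one exhibits an explicit $4\times4$ matrix pair $x,y$ realising the relations, in which $1,x,y,xy$ are linearly independent --- or, equivalently, one checks that the three straightening relations equip $K^{4}$ with a well-defined associative product for which all monomials satisfy $W^{2}=2W-1$. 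Either way $\{1,x,y,xy\}$ is a basis and $\dim_{K}(A)=4$.

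\emph{Expected obstacle.} The real content is concentrated in two spots: (a) making the coefficient comparison in (i) rigorous and then excluding $a=-1$ --- both taken care of by the identity $x^{3}=1$, which is only consistent in characteristic $3$, where $2=-1$ --- and (b) guessing the correct straightening relations for $yxy$, $xyx$, and $yx$. Once these are in hand, the spanning argument and the count $\dim_{K}(A)=4$ are routine bookkeeping, and linear independence is certified by the explicit matrix example.
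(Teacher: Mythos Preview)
Your proof is correct and follows the same architecture as the paper's: pin down $(a,b)$ by expanding $(X^{2})^{2}$ two ways, then extract straightening relations from $(x^{2}y)^{2}$, and finally invoke an explicit $4\times4$ matrix model for linear independence. The one genuine difference is your handling of the case $a=-1$: you observe that then $x^{3}=(a^{2}+b)x+ab=1$, and since $x^{3}\in S$ the semigroup relation gives $1=(x^{3})^{2}=a+b=-2$, forcing characteristic~$3$ in a single line. The paper instead runs the full $(x^{2}y)^{2}$ computation already in the $a=-1$ case, obtains $yxy=3xy+2y+x+1$, and then evaluates $xyxy$ two ways to conclude $3y=0$; your shortcut avoids that detour entirely. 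Your derivation of $yx=-xy+2x+2y-2$ by left-multiplying $xyx=2x+y-2$ by $x$ is likewise a minor variant of the paper's route through $y^{2}xy$. One small caveat: dismissing the degenerate possibility ``$\{1,x\}$ linearly dependent'' by appealing to the yet-unproved conclusion $\dim_{K}A=4$ is circular as written --- though the paper simply compares coefficients without comment, so you are no less rigorous on this point.
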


\begin{proof}
	 Since $X^2 = aX + b$, then
	
	\begin{equation} \label{comp8}
		\begin{array}{ll} 
			(X^{2})^{2} & = (aX+b)^{2} \\
			&  = a^{2}X^{2} + 2abX + b^{2} \\
			& = a^{2}(aX + b) + 2abX + b^{2} \\
			& = (a^{3} + 2ab)X + a^{2}b + b^{2} \\
		\end{array}
	\end{equation}
	and
	\begin{equation} \label{comp9}
		(X^{2})^{2} = aX^{2} + b = a(aX + b) + b = a^{2}X + ab + b 
	\end{equation}

\noindent  Therefore, we obtain

\begin{tiny}
	\begin{equation} \label{comp10}\left\{
		\begin{array}{l}
			a^{2} = a^{3} + 2ab \\
			ab + b = a^{2}b + b^{2} \\
		\end{array} \right. \Rightarrow  \left\{
		\begin{array}{l}
			a = a^{2} + 2b \\
			a + 1 = a^{2} + b \\
		\end{array} \right. \Rightarrow \left\{ \begin{array}{l} b = -1 \\
			a^{2} -a - 2 = 0 \end{array}\right.
		\Rightarrow \left\{ 
		\begin{array}{l}
			b = -1 \\
			a = 2 \text{ or } a = -1 \end{array} \right. 
	\end{equation}
	
	\vspace{1cm}
\end{tiny}
	
 For $X^2 = aX + b = -X - 1$, we have
	\begin{equation} \label{comp11}
		\begin{array}{ll}
			(x^{2}y)^{2}  & = -x^{2}y - 1 \\
			& = - (-x - 1)y - 1 \\
			&  = xy + y - 1 \end{array} 
	\end{equation}
	and
	\begin{equation} \label{comp12}
		\begin{array}{ll}
			(x^{2}y)^{2} & = [(-x -1)y]^{2}  \\
			& = (xy +y)^{2} \\
			& = (xy)^{2} + y^{2} +xy^{2} + yxy \\
			& = -xy - 1 - y - 1 +x(-y - 1) + yxy \\
			& = -2xy -2 - y - x + yxy \end{array}
	\end{equation}

\noindent  By (\ref{comp11}) and (\ref{comp12}), we obtain 

	\begin{equation} \label{comp13}
		yxy = 3xy + 2y + x + 1 
	\end{equation}

\noindent  Now,

	\begin{equation} \label{comp14}
		\begin{array}{ll}
			xyxy & = 3x^{2}y + 2xy + x^{2} + x \\
			& = 3(-x-1)y + 2xy +(-x - 1) + x \\
			& = -3xy -3y + 2xy - 1 = -xy - 3y - 1 
		\end{array}
	\end{equation}
	
\noindent  and

	\begin{equation} \label{comp15}
		\begin{array}{ll}
			xyxy = -xy - 1 
		\end{array}
	\end{equation}
	
\noindent  By (\ref{comp14}) and (\ref{comp15}),

	\begin{equation}
		3y = 0 
	\end{equation}

\noindent  	By symmetry, we can prove that $3x = 0.$

	\noindent  Therefore, $K$ has characteristic 3 and $a=-1 = 2.$
	
	\noindent  Hence, we have proved (i), since in all cases we can conclude that 
	$a=2$ and $b=-1.$

\noindent  Since $K$ has characteristic 3, we obtain that
	
	\begin{equation} \label{comp16}
		yxy =  2y + x + 1 
	\end{equation}

\noindent  Thus,

	\begin{equation}\label{comp17}
		y^{2}xy = y(2y + x + 1) = 2y^{2} + yx + y = -2y - 2 + yx + y = -2 + yx +2y 
	\end{equation}

\noindent  and

	\begin{equation}\label{comp18}
		y^{2}xy = (- y - 1)xy = -yxy - xy = -(2y + x + 1) - xy = -2y - x - 1 - xy
	\end{equation}

\noindent  By (\ref{comp17}) and (\ref{comp18}), we obtain that 

	\begin{equation} \label{comp19}
		yx + xy = -4y - x + 1 = 2y + 2x + 1
	\end{equation}
	
\noindent  	For $X^{2} = aX + b = 2X - 1,$ we have  
	
	\begin{equation} \label{comp20}
		\begin{array}{ll}
			(x^{2}y)^{2}  & = 2x^{2}y - 1 \\
			& = 2(2x - 1)y - 1 \\
			&  = 4xy -2y - 1 \end{array} 
	\end{equation}
	
\noindent  and

	\begin{equation} \label{comp21}
		\begin{array}{ll}
			(x^{2}y)^{2} & = [(2x -1)y]^{2}  \\
			& = (2xy -y)^{2} \\
			& = 4(xy)^{2} + y^{2} -2xy^{2} -2yxy \\
			& = 8xy - 4 +2y - 1 -2x(2y - 1) -2yxy \\
			& = 4xy -5 +2y +2x - 2yxy \end{array}
	\end{equation}
	
	\noindent  By (\ref{comp20}) and (\ref{comp21}), we obtain that 

	\begin{equation} \label{comp22}
		yxy = 2y + x - 2 
	\end{equation}

\noindent  	Now,

	\begin{equation} \label{comp23}
		\begin{array}{ll}
			y^{2}xy & = 2y^{2} + yx - 2y \\
			& = 4y - 2 + yx - 2y   \\
			& = 2y - 2 + yx  
		\end{array}
	\end{equation}
	
\noindent  and

	\begin{equation} \label{comp24}
		\begin{array}{ll}
			y^{2}xy & = (2y - 1)xy  \\
			& = 2yxy - xy \\
			& = 2(2y + x - 2) - xy \\
			& = 4y + 2x - 4 - xy   
		\end{array}
	\end{equation}
	
\noindent  By (\ref{comp23}) and (\ref{comp24}),

	\begin{equation}
		yx + xy = 2y + 2x - 2  
	\end{equation}

	\noindent  The equations from \eqref{comp11} to \eqref{comp24} show that $\{1, x, y, xy\} $
	 forms a basis of $A$ as a vector space over 
	$K$; consequently, 
	$\dim_{K}(A)=4$, thereby proving (ii) and (iii).

 \noindent  	Note that if $A$ is an algebra satisfying the above conditions, then every element  $w \in A$ can be written as
	\[ w = \alpha_{1} + \alpha_{2}x + \alpha_{3}y + \alpha_{4}xy, \]
	
\noindent  where $\alpha_{1}, \alpha_{2},  \alpha_{3},  \alpha_{4} \in K.$

	\noindent  Moreover, if  $w_{1} = \alpha_{1} + \alpha_{2}x + \alpha_{3} y + \alpha_{4}xy $ and $w_{2} = \beta_{1} + \beta_{2}x + \beta_{3}y + \beta_{4}xy, $ then

	\[ \begin{array}{lll} w_{1}w_{2}   & = & (\alpha_{1}\beta_{1} - \alpha_{2}\beta_{2} - 2\alpha_{3}\beta_{2} - \alpha_{3}\beta_{3} - 2\alpha_{3}\beta_{4} - 2\alpha_{4}\beta_{2} - \alpha_{4}\beta_{4}) \\
		& + & (\alpha_{1}\beta_{2} + \alpha_{2}\beta_{1} + 2\alpha_{3}\beta_{2} + 2\alpha_{2}\beta_{2} + \alpha_{3}\beta_{4} + 2\alpha_{4}\beta_{2} - \alpha_{4}\beta_{3})x \\
		& + & (\alpha_{1}\beta_{3} - \alpha_{2}\beta_{4} + 2\alpha_{3}\beta_{2}  + \alpha_{3}\beta_{1}+ 2\alpha_{3}\beta_{3} + 2\alpha_{3}\beta_{4} + \alpha_{4}\beta_{2})y \\
		& + & (\alpha_{1}\beta_{4} + \alpha_{2}\beta_{3} + 2\alpha_{2}\beta_{4}  - \alpha_{3}\beta_{2} + \alpha_{4}\beta_{1} + 2\alpha_{4}\beta_{3} + 2\alpha_{4}\beta_{4})xy
	\end{array} \]
	
\noindent  	In this case, we can take
	
	\[ x = \left(\begin{array}{rrrr} 0 & 1 & 0 & 0 \\
		-1 & 2 & 0 & 0 \\
		0 & 0 & 0 & 1 \\
		0 & 0 & -1 & 2 \\
	\end{array} \right) \]
	
\noindent  and

	\[ y = \left( \begin{array}{rrrr}
		0 & 0 & 1 & 0 \\
		-2 & 2 & 2 & -1 \\
		-1 & 0 & 2 & 0 \\
		-2 & 1 & 2 & 0 \\
	\end{array} \right), \]

\noindent  	as we can verify by running the GAP code from Appendix F.

\end{proof}

\begin{theorem}\label{t43} Let $K$ be a field with characteristic $p\neq 2,$ and let $S$ be the free multiplicative semigroup generated by the set $\{x_{1},\cdots, x_{m}\}.$ Let $A$ be an associative $K$-algebra generated by $S,$ and suppose that every element $X \in S$ satisfies  \[ X^2 = aX + b\]  with $a, b \in K^*$. Then

	\begin{itemize}
		\item[(i)] $a=2 $ and $b = -1;$
		\item[(ii)] $\displaystyle \{1\} \bigcup \{x_{1}, \cdots, x_{m}\} \bigcup_{1\leq j<k \leq m}\{ x_{j}x_{k}\}$ is a basis for $A$ as a vector space over $K$;
		\item[(iii)] $\displaystyle \dim_{K}(A) = \frac{m^{2}+m+2}{2}.$
	\end{itemize}

\end{theorem}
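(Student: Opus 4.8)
The plan is to bootstrap from the two-generator case and then linearise the problem by a change of variable.

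\emph{Part (i).} For $m\ge 2$, restrict to the subalgebra $\langle x_i,x_j\rangle\subseteq A$ spanned by words in two of the generators; every such word lies in $S$, so Theorem~\ref{t42} applies and its part~(i) gives $a=2$, $b=-1$. (For $m=1$ argue directly: the single-variable computation already forces $b=-1$ and $a\in\{2,-1\}$, and if $a=-1$ then $x_1^2+x_1+1=0$, whence $x_1^3=1$; since $x_1^3\in S$ must satisfy $X^2=aX+b$ we get $1=a+b=-2$, so $\operatorname{char}K=3$ and $a=-1=2$.) Henceforth $x_i^2=2x_i-1$ for every generator, and --- again by Theorem~\ref{t42} applied to each pair --- $x_ix_j+x_jx_i=2x_i+2x_j-2$ for $i\ne j$.

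\emph{Parts (ii)--(iii).} Put $u_i:=x_i-1$, so that $A$ is generated as a unital algebra by $u_1,\dots,u_m$, and the two displayed relations become $u_i^2=0$ and $u_iu_j=-u_ju_i$. Any $u$-monomial with a repeated letter then vanishes (permute the anticommuting letters until the repeat is adjacent and use $u_a^2=0$). The crucial step is the vanishing of cubic monomials in distinct letters: feed $W=x_ix_jx_k$ (distinct $i,j,k$) into the identity in the form $(W-1)^2=W^2-2W+1=0$, write $W-1=p+q+r$ with $p=u_i+u_j+u_k$, $q=u_iu_j+u_iu_k+u_ju_k$, $r=u_iu_ju_k$, and expand $(p+q+r)^2$ using only $u_a^2=0$ and $u_au_b=-u_bu_a$; every term cancels except $pq+qp=2u_iu_ju_k$, so $2u_iu_ju_k=0$ and hence $u_iu_ju_k=0$ since $\operatorname{char}K\ne 2$. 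Therefore every $u$-monomial of degree $\ge 3$ vanishes (a repeat kills it; otherwise its first three letters form a vanishing cubic), so $A=K\cdot 1+\sum_i Ku_i+\sum_{i<j}Ku_iu_j$ and $\dim_K A\le 1+m+\binom{m}{2}=\tfrac{m^2+m+2}{2}$. Since $x_i=1+u_i$ and $x_jx_k=1+u_j+u_k+u_ju_k$, the change of basis from $\{1\}\cup\{x_i\}\cup\{x_jx_k:j<k\}$ to $\{1\}\cup\{u_i\}\cup\{u_iu_j:i<j\}$ is unitriangular, hence invertible, so the former set spans $A$ as well.

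To obtain the equality in (iii), and thus that this spanning set is a basis, one must check it is linearly independent; for that it suffices to exhibit one algebra satisfying the hypotheses in which it is, e.g. the matrix algebra generated by $x_i=\mathrm{Id}+E_i$ with $E_i^2=0$, $E_iE_j=-E_jE_i$, $E_iE_jE_k=0$ (the $m$-generator analogue of the matrices given for Theorem~\ref{t42}), equivalently the universal algebra $K\langle x_1,\dots,x_m\rangle/(w^2-2w+1:w\in S)$. The main obstacle is precisely the cubic vanishing $u_iu_ju_k=0$: in the $X^2=0$ setting one simply evaluates the identity on $x+yz$, but here sums of algebra elements are unavailable, so a genuine length-three semigroup word must be pushed through the identity and unwound via the shift $u=x-1$, the only delicate point being the sign bookkeeping in $(p+q+r)^2$. (As for Theorem~\ref{t42}, the equality in (iii) is to be read for this universal algebra; a particular $A$ with, say, all $x_i=1$ has smaller dimension.)
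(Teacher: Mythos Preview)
Your proof is correct and follows a genuinely different route from the paper's. The paper works directly with the $x_i$: invoking the relation $XY+YX=2X+2Y-2$ for $X,Y\in S$ (inherited from the proof of Theorem~\ref{t42}), it expands $xyz$ once as $(xy)z$ and once as $x(yz)$ with further rewriting, then adds the two expressions so that the residual cubic term $zxy$ cancels, obtaining $xyz=1-x-y-z+xy+xz+yz$ and hence an inductive reduction of all longer words.

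Your substitution $u_i=x_i-1$ is cleaner and more conceptual: it converts the hypotheses into the exterior-algebra relations $u_i^{2}=0$ and $u_iu_j+u_ju_i=0$, after which a short expansion of $(x_ix_jx_k-1)^2=0$ delivers the single missing relation $u_iu_ju_k=0$ (which, unwound, is precisely the paper's cubic reduction formula above, since $(x_i-1)(x_j-1)(x_k-1)=u_iu_ju_k$). This makes visible the link with the $X^2=0$ theorem, explains structurally why the dimension is $1+m+\binom{m}{2}$, and scales to $m$ generators without the paper's somewhat ad~hoc double computation. You are also more careful than the paper about linear independence: the paper simply counts the spanning set and declares it a basis (supported only by a GAP check for $m=3$), whereas you correctly note that equality in~(iii) requires the universal algebra or an explicit model realising the bound.
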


\begin{proof} (i) follows from  Theorem \ref{t42}. Moreover, it follows from the proof of the Theorem  \ref{t42} that for every
$x,y,z \in S,$ the following equations hold:

\begin{equation}\label{t43-1}
yz=2y + 2z - 2 - zy
\end{equation}

\begin{equation}\label{t43-2}
 xz=2x + 2y - 2 - zx
\end{equation}

\begin{equation}\label{t43-3}
 zy = 2z + 2y - 2 -yz
\end{equation}

\begin{equation}\label{t43-4}
xyz = (xy)z = 2xy + 2z - 2 - zxy
\end{equation}

\noindent Thus

\begin{equation}\label{t43-5}
\begin{array}{cl}
xyz & = x(yz) \stackrel{\eqref{t43-1}}{=} x(2y + 2z - 2 - zy) \\
& = 2xy + 2xz - 2x - xzy \\
& \stackrel{\eqref{t43-2}}{=} 2xy + 2xz - 2x -(2x + 2z - 2 - zx)y \\
& = 2xy + 2xz - 2x -2xy -2zy + 2y + zxy \\
& = -2x + 2y + 2xz - 2zy + zxy \\
& \stackrel{\eqref{t43-3}}{=} -2x + 2y + 2xz - 2(2z + 2y - 2 - yz) + zxy \\
& = 4 -2x -2y -4z + 2xz +2yz \\
\end{array}
\end{equation}

\noindent By \eqref{t43-4} and \eqref{t43-5},  we have

\begin{equation}
2xyz = 2 - 2x - 2y - 2z + 2xy + 2xz + 2yx
\end{equation}

\noindent Therefore, since the characteristic of
$A$ is different from 2, it follows that

\begin{equation}\label{t43-6}
xyz = 1 - x - y - z + xy + xz + yx
\end{equation}

\noindent Hence, (ii) follows from the definition of the algebra $A$ , from equations \eqref{t43-1}–\eqref{t43-6}, and from (i). Moreover, by (ii) it follows that \[ \dim_{K}(A) = 1 + \binom{m+1}{2} = 1 + \frac{(m+1)m}{2} = \frac{2 + (m+1)m}{2} = \frac{m^2+m+2}{2}  \]

\noindent or

\[ \dim_{K}(A) = 1 + m + \binom{m}{2} = 1 + m + \frac{m(m-1)}{2} = \frac{2 + 2m + m^{2} - m}{2} = \frac{m^2+m+2}{2} \]

\end{proof}

\begin{example}
The associative $GF(3)$-algebra generated by $S = \langle x, y, z \rangle,$ where
\[
x = \left[
\begin{array}{rrrrrrr}
0 & 1 & 0 & 0 & 0 & 0 & 0 \\
-1 & 2 & 0 & 0 & 0 & 0 & 0 \\
0 & 0 & 0 & 0 & 1 & 0 & 0 \\
0 & 0 & 0 & 0 & 0 & 1 & 0 \\
0 & 0 & -1 & 0 & 2 & 0 & 0 \\
0 & 0 & 0 & -1 & 0 & 2 & 0 \\
1 & -1 & -1 & -1 & 1 & 1 & 1
\end{array}
\right],
\quad
y = \left[
\begin{array}{rrrrrrr}
0 & 0 & 1 & 0 & 0 & 0 & 0 \\
-2 & 2 & 2 & 0 & -1 & 0 & 0 \\
-1 & 0 & 2 & 0 & 0 & 0 & 0 \\
0 & 0 & 0 & 0 & 0 & 0 & 1 \\
-2 & 1 & 2 & 0 & 0 & 0 & 0 \\
-1 & 1 & 1 & -1 & -1 & 1 & 1 \\
0 & 0 & 0 & -1 & 0 & 0 & 2
\end{array}
\right],
\]
\\

\noindent and
\[
z = \left[
\begin{array}{rrrrrrr}
0 & 0 & 0 & 1 & 0 & 0 & 0 \\
-2 & 2 & 0 & 2 & 0 & -1 & 0 \\
-2 & 0 & 2 & 2 & 0 & 0 & -1 \\
-1 & 0 & 0 & 2 & 0 & 0 & 0 \\
-3 & 1 & 1 & 3 & 1 & -1 & -1 \\
-2 & 1 & 0 & 2 & 0 & 0 & 0 \\
-2 & 0 & 1 & 2 & 0 & 0 & 0
\end{array}
\right].
\]

\noindent satisfies the hypotheses of Theorem \ref{t43} and has dimension $7$ as a vector space, as we verify by running the GAP code from Appendix G.

\end{example}

\section{Conclusion}

In this work, we  classify associative algebras that are generated by a finite set of elements and satisfy specific quadratic polynomial identities. The results presented not only generalize  existing contributions to the theory of  nil-algebras but also provide new insights into the structure and dimensional characteristics  of associative algebras under various conditions. \\
	
\noindent As future work, this study may be extended to include associative algebras satisfying polynomial equations of degree greater than two.\\

\noindent Computational validation was performed using the {\bf GAP } software package, which confirmed the theoretical results.\\[3mm]

\noindent{\Large\bf  Disclaimer (Artificial Intelligence)}\\\

\noindent Author(s) hereby declare that ~NO ~generative~ AI~ technologies ~such ~as ~Large Language Models (ChatGPT, COPILOT, etc) and text-to-image generators have been used during writing or editing of manuscripts. 

\section*{Acknowledgements}

The author is grateful to the referees for their careful review and valuable comments and remarks to improve this manuscript.\\

\noindent{\Large\bf  Competing Interests}\\\\
Author has declared that no competing interests exist.

\nocite{*}

\printbibliography

\newpage
\section*{Appendix}

\appendix

\section{Verification Code for Example \ref{e1}}

\begin{verbatim}
	# Define the finite field GF(3)
	K := GF(3);
	
	# Left regular representation of the two generators of the algebra 
	# as two 4×4 matrices over GL(3)
	x := [[0,1,0,0],[0,0,0,0],[0,0,0,1],[0,0,0,0]] * One(K);
	y := [[0,0,1,0],[0,0,0,-1],[0,0,0,0],[0,0,0,0]] * One(K);
	
	# Construct the algebra A generated by x and y
	A := Algebra(K, [x, y]);
	
	
	# Check if A is nilpotent of index 3
	v := true;
	for i in A do
	for j in A do
	for k in A do
	if not IsZero(i*j*k) then
	v := false; break;
	fi;
	od;
	if v = false then break; fi;
	od;
	if v = false then break; fi;
	od;
	
	if v = false then
	Print("This algebra isn't nilpotent of index 3!");
	else
	Print("This algebra is nilpotent of index 3!");
	fi;
	
	# Check if every element in A satisfies x^2 = 0
	v := true;
	for i in A do
	if not IsZero(i*i) then
	v := false; break;
	fi;
	od;
	
	if v = false then
	Print("\nThis algebra doesn't satisfy the equation X^2=0");
	else
	Print("\nThis algebra satisfy the given condition!\n");
	fi;
	
	
	# Get a basis B for the algebra A
	B := Basis(A);
	
	# Compute the number of elements and dimension of A
	s := Size(A);
	t := Size(B);
	
	# Print size and dimension
	Print("\nSize of  A:", s, "\n");
	Print("\nDimension of A as a vectorial space:", t, "\n");
	
	#Creates the vector subspace over K generated by {x, y, x*y}
	Sub := VectorSpace(K, [x, y, x*y]);
	
	#Computes the dimension of the subspace Sub
	dim := Dimension(Sub);
	Print("\n Dimension of the subspace generated by {x, y, xy}: ",
 dim, "\n");\end{verbatim}

\section{Verification Code for Example \ref{e2}}

\begin{verbatim}
	# Define the finite field GF(5)
	K:=GF(5);
	
	# Left regular representation of the two generators of the algebra 
	# as two 4x4 matrices over GL(5)
	x:=[[0,1,0,0],[1,0,0,0],[0,0,0,1],[0,0,1,0]]*One(K);
	y:=[[0,0,1,0],[0,0,0,1],[1,0,0,0],[0,1,0,0]]*One(K);
	
	#Construct the semigroup S generated by x and y
	S:=Semigroup(x,y);
	
	
	# Construct the algebra A generated by x and y
	A := Algebra(K, [x, y]);
	# Check if every element in S satisfies X^2 = 1
	
	v:=true;
	for i in S do
	if not IsZero(i^2-One(A)) then v:=false; break; fi;
	od;
	
	if v=false then
	Print("\nThis algebra doesn't satisfy the given condition!\n");
	else
	Print("\nThis algebra satisfy the given condition!\n");
	fi;
	
	# Get a basis B for the algebra A
	B := Basis(A);
	
	# Compute the number of elements and dimension of A
	s := Size(A);
	t := Size(B);
	
	# Print size and dimension
	Print("\nSize of  A:", s, "\n");
	Print("\nDimension of A as a vectorial space:", t, "\n");
	
	#Creates the vector subspace over K generated by {1, x, y, xy}
	Sub := VectorSpace(K, [One(A), x, y, x*y]);
	
	#Computes the dimension of the subspace Sub
	dim := Dimension(Sub);
	
	Print("\n Dimension of the subspace generated by {1, x, y, xy}: ",
 dim, "\n");
\end{verbatim}

\section{Verification Code for Example \ref{e3}}

\begin{verbatim}
	# Define the finite field GF(2)
	K:=GF(2);`


	
	# Left regular representation of the two generators of the algebra 
	# as two 3x3 matrices over GL(2)
	x:=[[1,0,0],[0,0,1],[0,0,1]]*One(K);
	y:=[[0,0,1],[0,1,0],[0,0,1]]*One(K);
	
	
	# Construct the algebra A generated by x and y
	A := Algebra(K, [x, y]);
	
	
	# Check if A is abelian
	if (IsAbelian(A)=true) then Print("\nA is Abelian");
	else Print("\nA is not Abelian"); fi;
	
	# Check if every element in S satisfies X^2 = X
	v:=true;
	for i in A do
	if not IsZero(i^2-i) then v:=false; break; fi;
	od;
	
	if v=false then
	Print("\nThis algebra doesn't satisfy the given condition!\n");
	else
	Print("\nThis algebra satisfy the given condition!\n");
	fi;
	
	# Get a basis B for the algebra A
	B := Basis(A);
	
	# Compute the number of elements and dimension of A
	s := Size(A);
	t := Size(B);
	
	# Print size and dimension
	Print("\nSize of  A:", s, "\n");
	Print("\nDimension of A as a vectorial space:", t, "\n");
	
	#Creates the vector subspace over K generated by {x, y, xy}
	Sub := VectorSpace(K, [x, y, x*y]);
	
	#Computes the dimension of the subspace Sub
	dim := Dimension(Sub);
	
	Print("\n Dimension of the subspace generated by {x, y, xy}: ", dim, "\n");
		\end{verbatim}

\section{Verification Code for Example \ref{e4}}

\begin{verbatim}
	# Define the finite field GF(2)
	K:=GF(2);
	
	# Left regular representation of the three generators of the algebra 
	# as two 7x7 matrices over GL(2)
	x:=[[1,0,0,0,0,0,0],[0,0,0,1,0,0,0],[0,0,0,0,1,0,0],
	[0,0,0,1,0,0,0],[0,0,0,0,1,0,0],[0,0,0,0,0,0,1],[0,0,0,0,0,0,1]
	]*One(K);
	y:=[[0,0,0,1,0,0,0],[0,1,0,0,0,0,0],[0,0,0,0,0,1,0],
	[0,0,0,1,0,0,0],[0,0,0,0,0,0,1],[0,0,0,0,0,1,0],
	[0,0,0,0,0,0,1]]*One(K);
	z:=[[0,0,0,0,1,0,0],[0,0,0,0,0,1,0],[0,0,1,0,0,0,0],[0,0,0,0,0,0,1],
	[0,0,0,0,1,0,0],[0,0,0,0,0,1,0],[0,0,0,0,0,0,1]]*One(K);
	
	
	# Construct the algebra A generated by x, y and z
	A := Algebra(K, [x, y, z]);
	
	
	# Check if A is abelian
	if (IsAbelian(A)=true) then Print("\nA is Abelian");
	else Print("\nA is not Abelian"); fi;
	
	# Check if every element in A satisfies X^2 = X
	v:=true;
	for i in A do
	if not IsZero(i^2-i) then v:=false; break; fi;
	od;
	
	if v=false then
	Print("\nThis algebra doesn't satisfy the given condition!\n");
	else
	Print("\nThis algebra satisfy the given condition!\n");
	fi;
	
	# Get a basis B for the algebra A
	B := Basis(A);
	
	# Compute the number of elements and dimension of A
	s := Size(A);
	t := Size(B);
	
	# Print size and dimension
	Print("\nSize of  A:", s, "\n");
	Print("\nDimension of A as a vectorial space:", t, "\n");
	
	#Creates the vector subspace over K generated by {x, y, z, xy, xz, 
 yz, xyz}
	
	Sub:=VectorSpace(K,[x,y,z,x*y,x*z,y*z,x*y*z]);
	
	#Computes the dimension of the subspace Sub
	dim := Dimension(Sub);
	
	Print("\n Dimension of the subspace generated by {x, y, z, xy, xz, yz,
 xyz}: ", dim, "\n");
\end{verbatim}

\section{Verification Code for Theorem \ref{t33}}

\begin{verbatim}
	# Define the finite field GF(5)
	K:=GF(5);
	
	# Left regular representation of the two generators of the algebra 
	# as two 7x7 matrices over GL(5)
	x:=[[0,1,0,0,0,0,0],[0,1,0,0,0,0,0],[0,0,0,1,0,0,0],[0,0,0,1,0,0,0],
	[0,0,0,0,0,1,0],[0,0,0,0,0,1,0],[0,0,0,1,0,0,0]]*One(K);
	y:=[[0,0,1,0,0,0,0],[0,0,0,0,1,0,0],[0,0,1,0,0,0,0],
	[0,0,0,0,0,0,1],[0,0,0,0,1,0,0],[0,0,0,0,1,0,0],[0,0,0,0,0,0,1]]*One(K);
	
	#Construct the semigroup S generated by x and y
	S:=Semigroup(x,y);
	
	
	# Construct the algebra A generated by x and y
	A := Algebra(K, [x, y]);
	
	
	# Check if every element in S satisfies X^2 = X
	
	v:=true;
	for i in S do
	if not IsZero(i^2-i) then v:=false; break; fi;
	od;
	
	if v=false then
	Print("\nThis algebra doesn't satisfy the given condition!\n");
	else
	Print("\nThis algebra satisfy the given condition!\n");
	fi;
	
	# Get a basis B for the algebra A
	B := Basis(A);
	
	# Compute the number of elements and dimension of A
	s := Size(A);
	t := Size(B);
	
	# Print size and dimension
	Print("\nSize of  A:", s, "\n");
	Print("\nDimension of A as a vectorial space:", t, "\n");
	
	#Creates the vector subspace over K generated by { x, y, xy, yx, xyx, 
 yxy}

	Sub:=VectorSpace(K,[x,y,x*y,y*x,x*y*x,y*x*y]);
	
	#Computes the dimension of the subspace Sub
	dim := Dimension(Sub);
	
	Print("\n Dimension of the subspace generated by {x, y,xy, yx, yxy}: ",
 dim, "\n");
\end{verbatim}

\section{Verification Code for Theorem \ref{t42}}

\begin{verbatim}
	# Define the finite field GF(5)
	K:=GF(5);
	
	# Left regular representation of the two generators of the algebra 
	# as two 4x4 matrices over GL(5)
	x:=[[0,1,0,0],[-1,2,0,0],[0,0,0,1],[0,0,-1,2]]*One(K);
	y:=[ [0,0,1,0],[-2,2,2,-1],[-1,0,2,0],[-2,1,2,0]]*One(K);
	
	#Construct the semigroup S generated by x and y
	S:=Semigroup(x,y);
	
	
	# Construct the algebra A generated by x and y
	A := Algebra(K, [x, y]);
	# Check if every element in S satisfies X^2 = 2X - 1
	
	v:=true;
	for i in S do
	if not IsZero(i^2-2*i+One(A)) then v:=false; break; fi;
	od;
	
	
	if v=false then
	Print("\nThis algebra doesn't satisfy the given condition!\n");
	else
	Print("\nThis algebra satisfy the given condition!\n");
	fi;
	
	# Get a basis B for the algebra A
	B := Basis(A);
	
	# Compute the number of elements and dimension of A
	s := Size(A);
	t := Size(B);
	
	# Print size and dimension
	Print("\nSize of  A:", s, "\n");
	Print("\nDimension of A as a vectorial space:", t, "\n");
	
	#Creates the vector subspace over K generated by { 1, x, y, xy }
	Sub:=VectorSpace(K,[One(A),x,y,x*y]);
	
	#Computes the dimension of the subspace Sub
	dim := Dimension(Sub);
	
	Print("\n Dimension of the subspace generated by {1, x, y, xy}: ", 
 dim, "\n");
\end{verbatim}

\section{Verification Code for Theorem \ref{t43}}

\begin{verbatim}
#Define the finite field GF(3)
K:=GF(3);

#Left regular representation of the three generators of the algebra
#as three matrices over GL(3)
x := [[0,1,0,0,0,0,0,],
      [-1,2,0,0,0,0,0,],
      [0,0,0,0,1,0,0,],
      [0,0,0,0,0,1,0,],
      [0,0,-1,0,2,0,0,],
      [0,0,0,-1,0,2,0,],
      [1,-1,-1,-1,1,1,1]] * One(K);

y := [[0,0,1,0,0,0,0,],
      [-2,2,2,0,-1,0,0],
      [-1,0,2,0,0,0,0,],
      [0,0,0,0,0,0,1],
      [-2,1,2,0,0,0,0],
      [-1,1,1,-1,-1,1,1],
      [0,0,0,-1,0,0,2]] * One(K);

z := [[0,0,0,1,0,0,0],
      [-2,2,0,2,0,-1,0],
      [-2,0,2,2,0,0,-1],
      [-1,0,0,2,0,0,0],
      [-3,1,1,3,1,-1,-1],
      [-2,1,0,2,0,0,0],
      [-2,0,1,2,0,0,0]] * One(K);
# Construct the semigroup S generated by x, y, and z
S := Semigroup(x, y, z);

# Construct the associative algebra A generated by x,y, and z over GF(3)
A := Algebra(K, [x, y, z]);

# Check if every element in S satisfies X^2 = 2*X - 1 (as in Theorem t43)
v := true;
for i in S do
	if not IsZero(i^2 - 2*i + One(A)) then
		v := false;
		break;
	fi;
od;

if v = false then
	Print("\nThis algebra does not satisfy the given quadratic condition!\n");
else
	Print("\nThis algebra satisfies the given quadratic condition!\n");
fi;

# Get a basis B for the algebra A
B := Basis(A);

# Compute the number of elements and dimension of A
s := Size(A);
t := Size(B);

# Print size and dimension
Print("\nSize of A: ", s, "\n");
Print("\nDimension of A as a vector space: ", t, "\n");

# Create the subspace generated by {1, x, y, z, xy, xz, yz}
Sub := VectorSpace(K, [One(A), x, y, z, x*y, x*z,y*z]);

# Compute the dimension of the subspace Sub
dim := Dimension(Sub);

Print("\nDimension of the subspace generated by
{1, x, y, z, xy, xz, yz}: ", dim, "\n");


\end{verbatim}

\scriptsize\--------------------------------------------------------------------------------------------------------------\\

\noindent\tiny{\bf{DISCLAIMER}}\\
This chapter is an extended version of the article published by the same author(s) in the following journal. Asian Research Journal of Mathematics, 21(6): 109-125, 2025 . DOI:10.9734/arjom/2025/v21i6947 \\ 
Available: https://journalarjom.com/index.php/ARJOM/article/view/947\\ [2mm]
\end{document}